\newcommand{\Z}{\mathbb Z}	
\newcommand{\Q}{\mathbb Q}	
\newtheorem{theorem}{Theorem}[section]
\newtheorem{lemma}[theorem]{Lemma}
\newtheorem{remark}[theorem]{Remark}
\newtheorem{example}[theorem]{Example}
\newcommand{\PKm}{p_{i}^{k_{i,m}}}
\newcommand{\PKA}{p^{k}}
\newcommand{\NPMA}{n'}
\newcommand{\TNPIA}{\theta^{n'}}
\newcommand{\ITA}{\eta_{k}}
\newcommand{\ITPA}{\eta_{k}^{p^k}}
\newcommand{\TNPMJA}{\theta^{m}}
\newcommand{\CNPMJA}{C_{m}}
\newcommand{\ANPMJA}{\delta_{m}}
\begin{document}

\pagenumbering{arabic}
\author[]{Anuj Jakhar}

\author[]{{\small {ANUJ JAKHAR\footnote{The Institute of Mathematical Sciences, HBNI, CIT Campus, Taramani, Chennai - 600113, Tamil Nadu, India. Email : anujjakhar@iisermohali.ac.in}  ~~ SUDESH K. KHANDUJA\footnote{Corresponding author.} \footnote{ Indian Institute of Science Education and Research Mohali, Sector 81, Knowledge City, SAS Nagar, Punjab - 140306, India  \textsc{\&} Department of Mathematics, Panjab University, Chandigarh - 160014, India. Email : skhanduja@iisermohali.ac.in} ~~AND~ NEERAJ SANGWAN\footnote{Indian Institute of Technology (IIT), Bombay,  Mumbai-400076, India, neerajsan@iisermohali.ac.in} }}}

\date{}
\renewcommand\Authands{}

\title{\textsc{On integral basis of pure number fields}}

\maketitle
\begin{center}
{\large{\bf {\textsc{Abstract}}}}
\end{center}  Let $K=\mathbb{Q}(\sqrt[n]{a})$ be an extension of degree $n$ of the field $\Q$ of rational numbers, where the integer $a$ is such that for each prime $p$ dividing $n$ either $p\nmid a$ or the highest power of $p$ dividing $a$ is coprime to $p$; this condition is clearly satisfied when $a, n$ are coprime or $a$ is squarefree. The present paper gives explicit construction of  an integral basis of $K$ along with applications. This construction of an integral basis of $K$ extends a result proved in [J. Number Theory, {173} (2017), 129-146] regarding periodicity of integral bases of  $\mathbb{Q}(\sqrt[n]{a})$ when $a$ is  squarefree. 

\bigskip

\noindent \textbf{Keywords :} Rings of algebraic integers; Integral basis and discriminant.

\bigskip

\noindent \textbf{2010 Mathematics Subject Classification }: 11R04; 11R29.
\newpage
\section{\textsc{Introduction and Statement of Results}}
 Discriminant  is  a valuable tool to find a $\Z$-basis for the ring $A_K$ of
algebraic integers in an algebraic number field $K$. To describe a $\Z$-basis of $ A_K$ (called an integral basis of $K$) is in general a difficult task.
   The problem of  computation of integral basis specially for pure 
algebraic number fields has attracted the attention of several mathematicians. In 1900, Dedekind \cite{Ded} described an integral basis of  pure cubic fields.   Westlund  \cite{Wes} in 1910 gave an integral basis for all fields  of the type $\Q(\sqrt[p]{a})$ having prime degree $p$ over  $\Q$.  
 In 1984, Funakura \cite{Fun} 
determined an integral basis of all pure quartic fields. In  2015, Hameed and Nakahara  \cite{HN} provided an integral basis of those pure octic fields $\Q(\sqrt[8]{a})$, where $a$ is a squarefree integer. In 2017,  Ga\'{a}l and  Remete \cite{Ga-Re} gave a construction of integral basis of $K= \Q(\sqrt[n]{a})$ where the integer $a$ is squarefree and $3\leq n\leq 9$; they further proved that if $a, a'$ are squarefree integers which are congruent modulo $n_0^n$, $n_0$ being the largest square dividing $n^n$ and if $\theta, \theta'$ are roots of the irreducible polynomials $x^n - a$, $x^n - a'$ respectively, then an element $\frac{1}{q}(c_0 + c_1\theta + \cdots + c_{n-1}\theta^{n-1})$ with $c_i, q\in \Z$, is an algebraic integer if and only if so is $\frac{1}{q}(c_0 + c_1\theta' + \cdots + c_{n-1}\theta'^{n-1})$. This result they expressed by saying that the integral bases of the fields  $\Q(\sqrt[n]{a})$  are periodic in $a$ with period $n_0^n$ when $a$ is squarefree. 

In the present paper, our aim is to give an explicit construction of an integral basis of pure fields of the type $K=\Q(\sqrt[n]{a})$ where for each prime $p$ dividing $n$, either $p$ does not divide $a$ or the highest power of $p$ dividing $a$ is coprime to $p$; this condition is satisfied when either $(a,n)=1$ or $a$ is squarefree. Moreover, we shall quickly deduce from our construction that the integral bases of the fields $\Q(\sqrt[n]{a})$, $a$ being squarefree, are periodic in $a$ with period $np_1\cdots p_k$, where $p_1, \cdots, p_k$ are the distinct primes dividing $n$. We shall use the following theorem proved in $(\cite{Gass}, \cite{JKS26})$.

\noindent{\bf Theorem 1.A.} {\it  Let $K=\mathbb Q(\theta)$ be an algebraic number field  with discriminant $d_K$,  where $\theta$ is a root of an irreducible polynomial  $f(x) = x^{n} - a$ belonging to $\mathbb Z[x]$.  Let $\prod\limits_{i=1}^k p_i ^{s_i}, \prod\limits_{j=1}^l q_{j}^{t_j}$ be the prime factorizations of $n,|a|$ respectively. Let $m_j, n_i$ and $r_i$ stand respectively for the integers $\gcd(n, t_j), \frac{n}{p_i^{s_i}}$ and $ v_{p_i} (a^{p_i -1} -1)-1$. Assume that $a$ is $n$-th power free and for each $i$,  either $v_{p_i}(a)=0 $ or $v_{p_i}(a)$ is coprime to $p_i$.  Then $$d_K 
=(-1)^{\frac{(n-1)(n-2)}{2}} sgn(a^{n-1})(\prod\limits _{i=1}^k p_i ^{ v_i}) 
\prod\limits_{j=1}^l q_j ^{n-m_j}, 
$$ where   $v_i $ equals  $ns_i -2n_i \sum\limits_{j=1}^{\min\{r_i,s_i\} }  p_i^{s_i 
	-j}$ or $ns_i $ according as  $r_i>0$ or not.} 

The basic idea of our construction of integral basis $\Q(\sqrt[n]{a})$ has been derived from two  lemmas which are stated below and will be proved in the next section. These lemmas lead to a method to construct certain algebraic integers in $K$ which play a significant role in the formation of integral basis of $K$. The proofs of all our results in the paper are based on elementary algebraic number theory and are self contained. 
 In what follows, $\theta$ is a root of 
an irreducible polynomial $x^n - a \in \Z[x]$, where $|a|$ is $n$-th power free which is expressed as 
$\prod\limits_{j=1}^{n-1}a_j ^j$ where $a_j$'s are squarefree numbers which are relatively prime in 
pairs. For a real number $\lambda$, $\lfloor \lambda \rfloor$ will stand for the greatest integer not exceeding $\lambda$.  For $0 \leq m \leq n-1$, $C_m$ will stand for the number $\prod\limits_{j=1}^{n-1}a_j 
^{\lfloor\frac{jm}{n}\rfloor}$. Keeping in mind that $\theta^n = a$, it can be easily seen that the element 
$\frac{\theta^m}{C_m}$ satisfies the polynomial $x^n - sgn(a)^m\prod\limits_{j=1}^{n-1}a_{j}^{jm-
	n\lfloor\frac{jm}{n}\rfloor}$ and hence is an algebraic integer.

With above notation, we shall prove the following two lemmas in the next section. 
\begin{lemma} \label{3.3}
	Let $K = \Q(\theta)$ with $\theta$  a root of an irreducible polynomial $x^n -a \in \Z[x]$. Let $s\geq 1$ be 
	the highest power of a prime number $p$ dividing $n$. Assume that $p$ does not divide $a$ and the integer $r = v_p(a^{p-1}-1)-1$ is positive. Corresponding to a number $k$, $1 \leq k \leq \min\{r, s\}$, let  $b'$ be an integer satisfying $ab' 
	\equiv 1~(mod~p^{k+1})$ and $a'$ be given by $(b')^{p^{s-k-1}}$ or $b'$ according as $k < s$ or not. Let $\NPMA$ denote the integer $\frac{n}{p^k}$.  If $\ITA$ 
	is the element  $ \sum\limits_{j=0}^{\PKA-1}\left(a'\TNPIA\right)^{j}$ of $K$, then $\dfrac{\ITA}{\PKA}$ is an algebraic integer.
\end{lemma}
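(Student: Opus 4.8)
The plan is to show that the characteristic polynomial of $\eta_k/p^k$, viewed as an element of the degree-$p^k$ subfield $\mathbb{Q}(\phi)\subseteq K$ with $\phi=\theta^{n'}$, lies in $\mathbb{Z}[x]$; being monic, this forces $\eta_k/p^k$ to be an algebraic integer. First I would set $\alpha=a'\theta^{n'}=a'\phi$ and $\beta=\alpha^{p^k}=(a')^{p^k}a\in\mathbb{Z}$. Since $\phi^{p^k}=a$ while $[\mathbb{Q}(\theta):\mathbb{Q}]=n$ forces $[\mathbb{Q}(\phi):\mathbb{Q}]=p^k$, the minimal polynomial of $\alpha$ over $\mathbb{Q}$ is exactly $x^{p^k}-\beta$. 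Writing $\eta_k=(\alpha^{p^k}-1)/(\alpha-1)=(\beta-1)/(\alpha-1)$ then exhibits $\eta_k$ as a rational function of $\alpha$ whose denominator has the same shape at every conjugate.

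Next, using that the conjugates $\alpha_0,\dots,\alpha_{p^k-1}$ of $\alpha$ all satisfy $\alpha_i^{p^k}=\beta$, I would compute the characteristic polynomial directly, the closed form coming from evaluating $\prod_i\bigl(y-(\alpha_i-1)\bigr)=(y+1)^{p^k}-\beta$ at the appropriate argument:
\[
P(x)=\prod_{i}\Bigl(x-\tfrac{\beta-1}{p^k(\alpha_i-1)}\Bigr)=x^{p^k}-\sum_{j=1}^{p^k}\binom{p^k}{j}\frac{(\beta-1)^{j-1}}{p^{kj}}\,x^{p^k-j}.
\]
Since $\beta-1$ and each $\binom{p^k}{j}$ are integers, the denominators only involve $p$, so integrality of $P$ reduces to the $p$-adic inequalities $v_p\!\left(\binom{p^k}{j}(\beta-1)^{j-1}\right)\ge kj$ for $1\le j\le p^k$. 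Invoking the classical valuation $v_p\binom{p^k}{j}=k-v_p(j)$ together with the elementary bound $v_p(j)\le j-1$, all of these follow at once from the single claim $v_p(\beta-1)\ge k+1$.

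It remains to prove $v_p(\beta-1)\ge k+1$, which is where the hypotheses on $b'$, $a'$ and $r$ enter and which I expect to be the crux. Writing $(a')^{p^k}=(b')^{p^e}$ with $e=s-1$ when $k<s$ and $e=s$ when $k=s$ (so that $e\ge k\ge1$ in either case), I would start from $ab'\equiv1\pmod{p^{k+1}}$ and apply a lifting-the-exponent estimate to get $(ab')^{p^e}\equiv1\pmod{p^{\,k+1+e}}$, whence $\beta=(b')^{p^e}a\equiv a^{1-p^e}\pmod{p^{\,k+1+e}}$. On the other hand $a^{p^e-1}=(a^{p-1})^{(p^e-1)/(p-1)}$ together with $a^{p-1}\equiv1\pmod{p^{r+1}}$ gives $v_p(a^{1-p^e}-1)=v_p(a^{p^e-1}-1)\ge r+1\ge k+1$, using $r\ge k$. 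Combining the two congruences by the ultrametric inequality yields $v_p(\beta-1)\ge\min\{k+1+e,\,r+1\}\ge k+1$, as required. The one point demanding care is the prime $p=2$, where the naive lifting rule fails: there one must use the exact formula $v_2(x^{2^e}-1)=v_2(x-1)+v_2(x+1)+e-1$ (valid since $e\ge1$) to recover the same bound, while the odd exponent $p^e-1$ is handled by $v_2(a^{p^e-1}-1)=v_2(a-1)=r+1$.
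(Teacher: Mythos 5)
Your proof is correct, and it lands on exactly the same three ingredients as the paper's own proof: the monic polynomial $x^{p^k}-\sum_{j=1}^{p^k}\binom{p^k}{j}\left(a(a')^{p^k}-1\right)^{j-1}p^{-kj}x^{p^k-j}$, the crux congruence $a(a')^{p^k}\equiv 1~(mod~p^{k+1})$, and the valuation count $v_p\binom{p^k}{j}=k-v_p(j)$ together with $v_p(j)\leq j-1$ (the paper phrases the last step via $p^w\leq j<p^{w+1}$, which is equivalent). What differs is the route. You pass to the degree-$p^k$ subfield $\Q(\theta^{n'})$, identify $x^{p^k}-\beta$ as the minimal polynomial of $\alpha=a'\theta^{n'}$, and obtain the polynomial as the characteristic polynomial of $\eta_k/p^k$ from the conjugates via $\prod_i\left(y-(\alpha_i-1)\right)=(y+1)^{p^k}-\beta$; the paper never leaves $K$ and gets the same polynomial by bare algebra: multiply $\eta_k$ by $a'\theta^{n'}-1$ to get $a'\theta^{n'}\eta_k=\eta_k+\beta-1$, raise to the $p^k$-th power, and divide by $\beta-1$. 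Your version buys conceptual transparency (the polynomial is visibly a characteristic polynomial, hence automatically monic of the right degree), but it costs you two checks the paper handles explicitly or avoids: you need $[\Q(\theta^{n'}):\Q]=p^k$ (fine, by multiplicativity of degrees, as you note), and you silently divide by $1-\beta$, so you must verify $\beta\neq 1$ (equivalently $\alpha_i\neq 1$ for every conjugate); the paper supplies exactly this line, observing that $a(a')^{p^k}=1$ with integers forces $a=\pm 1$, contradicting irreducibility of $x^n-a$ — add that sentence. For the congruence itself, your lifting-the-exponent argument modulo $p^{k+1+e}$, with the separate formula $v_2(x^{2^e}-1)=v_2(x-1)+v_2(x+1)+e-1$ at $p=2$, is correct but heavier than needed: since only a congruence modulo $p^{k+1}$ is ever used, raising $ab'\equiv 1~(mod~p^{k+1})$ to the $p^e$-th power already gives $(ab')^{p^e}\equiv 1~(mod~p^{k+1})$ with no lifting and no case split at $p=2$; the paper does precisely this, pairing it with $a^p\equiv a~(mod~p^{k+1})$ (which follows from $k\leq r$ and plays the role of your identity $a^{p^e-1}=(a^{p-1})^{(p^e-1)/(p-1)}$).
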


\begin{lemma}\label{3.4}
	Let $x^n -a,  p, s$ and $r$ be as in Lemma \ref{3.3}. For any given integer $m, 0 \leq m \leq n-1$, let $k_m$ be the largest non-negative integer not exceeding $\min\{r,s\}$ such that $m = n - \frac{n}{p^{k_m}} + j_m$ with $j_m \geq 0$. Let  $b'_m$ be an integer satisfying $ab'_m 
	\equiv 1~(mod~p^{k_m+1})$ and $a'_m$ be given by $(b'_m)^{p^{s-k_m-1}}$ or $b'_m$ according as $k_m < s$ or not.  Let $n'_m$ stand for the integer $\frac{n}{p^{k_m}}$ and $w_{m}$ be an integer such that  $w_{m}\CNPMJA (a'_m)^{p^{k_m} - 1} \equiv  1 (mod ~p^{k_m}).$ Let $\ANPMJA$ stand for the element  $ w_{m}\CNPMJA\theta^{j_{m}} \sum\limits_{j=0}^{p^{k_m}-2}\left(a'_m\theta^{n'_m}\right)^{j} $ or $0$ of $\Z[\theta]$ according as $k_m > 0$ or $k_m = 0$, then $\dfrac{\TNPMJA + \ANPMJA}{p^{k_m}~ \CNPMJA}$ is an algebraic integer.
\end{lemma}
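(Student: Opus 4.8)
The plan is to reduce the statement to Lemma \ref{3.3} together with the elementary observation recorded just before the lemmas, namely that $\dfrac{\theta^m}{C_m}$ is an algebraic integer for every $m$. First I would dispose of the trivial case $k_m=0$: here $\delta_m=0$ and $p^{k_m}=1$, so the assertion is literally that $\theta^m/C_m$ is an algebraic integer, which is the preliminary remark. So assume $k_m>0$ and abbreviate $k=k_m$, $n'=n'_m$, $j=j_m$, $a'=a'_m$, $C=C_m$, $w=w_m$, and set $y=a'\theta^{n'}$.

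The key algebraic step is to exploit the defining relation $m=n-\tfrac{n}{p^{k}}+j$. Since $n-\tfrac{n}{p^{k}}=n'(p^{k}-1)$, one gets the two bookkeeping identities $\theta^m=(\theta^{n'})^{p^{k}-1}\theta^{j}$ and $\theta^{j}y^{p^{k}-1}=(a')^{p^{k}-1}\theta^{m}$. Next I would recognize that the partial geometric sum occurring in $\delta_m$ is $\sum_{j'=0}^{p^{k}-2}y^{j'}=\eta_{k}-y^{p^{k}-1}$, where $\eta_{k}=\sum_{j'=0}^{p^{k}-1}y^{j'}$ is exactly the element of Lemma \ref{3.3} (the data $a'_m,n'_m$ of Lemma \ref{3.4} are, for this fixed $k_m$, precisely those of Lemma \ref{3.3}). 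Substituting and using the second identity above yields the decomposition
\[
\theta^m+\delta_m=\theta^m\bigl(1-wC(a')^{p^{k}-1}\bigr)+wC\,\theta^{j}\eta_{k}.
\]

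Dividing by $p^{k}C$ then splits the expression into two summands, each of which is visibly an algebraic integer. The second summand reduces to $w\,\theta^{j}\cdot\dfrac{\eta_{k}}{p^{k}}$, which is an algebraic integer because $\eta_{k}/p^{k}$ is one by Lemma \ref{3.3} while $w,\theta^{j}\in\Z[\theta]$. For the first summand I would invoke the congruence $wC(a')^{p^{k}-1}\equiv 1\ (\mathrm{mod}\ p^{k})$ defining $w$ to write $1-wC(a')^{p^{k}-1}=p^{k}t$ with $t\in\Z$, so that the first summand collapses to $t\,\dfrac{\theta^{m}}{C}$, an algebraic integer by the preliminary remark. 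The only genuine verifications are that $w$ actually exists, i.e.\ that $C(a')^{p^{k}-1}$ is a unit modulo $p^{k}$: this holds since $p\nmid a$ forces $p\nmid a_j$ for every $j$ and hence $p\nmid C$, while $a'$ is a power of $b'$ which is coprime to $p$. The crux, and the one line worth isolating, is the identity above: $\delta_m$ is engineered exactly so that $\theta^m+\delta_m$ becomes a $p^{k}$-multiple of $\theta^m$ plus a $C$-multiple of $\theta^{j}\eta_{k}$, after which the result follows immediately from the two facts already in hand; no serious obstacle remains beyond this recognition and the exponent bookkeeping $n-\tfrac{n}{p^{k}}=n'(p^{k}-1)$.
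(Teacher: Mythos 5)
Your proof is correct and is essentially the paper's own argument: the paper defines $w'_m$ by $w_mC_m(a'_m)^{p^{k_m}-1}+w'_mp^{k_m}=1$ and shows that the algebraic integer $w_m\theta^{j_m}\dfrac{\eta_m}{p^{k_m}}+w'_m\dfrac{\theta^m}{C_m}$ equals $\dfrac{\theta^m+\delta_m}{p^{k_m}C_m}$, which is exactly your decomposition read in the opposite direction (your $t$ is the paper's $w'_m$). Both arguments rest on the same three ingredients: Lemma \ref{3.3} for $\eta_{k_m}/p^{k_m}$, the preliminary remark that $\theta^m/C_m$ is an algebraic integer, and the exponent identity $j_m+n'_m(p^{k_m}-1)=m$.
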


For reader's convenience we first describe an explicit integral basis of $K$  (with $K$ as in Theorem 1.A)  when its degree is power of a prime.
\begin{theorem}\label{3.3'}
	Let $K = \Q(\theta)$ with $\theta$  a root of an irreducible polynomial $x^{p^s} - a $ belonging to $ \Z[x]$ of degree power of a prime $p$, where $|a|$ is $p^s$-th power free which is expressed as
	$\prod\limits_{j=1}^{p^s-1}a_j^j$, $a_j$'s being squarefree numbers relatively prime in pairs. Assume that $p$ does not divide the $\gcd$ of $a$ and $v_p(a)$. Let $r, d$ denote respectively the integers
	$v_p(a^{p-1}-1)-1$ and $\min\{r, s\}$.  For $0\leq m <p^s$, let $k_m$ stand for the largest non-negative integer not exceeding $d$ such that $m \geq p^s - p^{s-k_m}$ and  $j_m$ be given by $m = p^s - p^{s-k_m} + j_m$. Let $C_m$ stand for $\prod\limits_{j=1}^{p^s-1}a_j^{\lfloor\frac{jm}{p^s}\rfloor}$. 
	When  $m\geq \phi(p^s)$\footnote{Note that  $m\geq \phi(p^s)$  if and only if $k_m \geq 1$. }, fix any integer $a'_m$ congruent to $a^{(p-2)(p^{s-k_m-1})}$ or $a^{p-2}$ modulo $p^{k_m+1}$ according as $k_m < s$ or $k_m = s$. 
	Let $w_m$ belonging to $\Z$ be such that  $w_m C_{m} (a'_{m})^{p^{k_m}-1}\equiv 1~(mod~p^{k_m})$ and $\delta_m$ denote the element $w_mC_m\theta^{j_m}\sum\limits_{i=0}^{p^{k_m}-2}(a'_m\theta^{p^{s-k_m}})^i$ when $m\geq \phi(p^s)$. Then the following hold:\\
	$(i)$ If $r = 0$ or $-1$, then $\{1, \frac{\theta}{C_1}, \frac{\theta^2}{C_2}, \cdots, \frac{\theta^{p^s-1}}{C_{p^s-1}}\}$ is an integral basis of $K$.\\
	$(ii)$ If $r \geq 1$, then $\{1, \frac{\theta}{C_1}, \frac{\theta^2}{C_2}, \cdots, \frac{\theta^{\varphi{(p^s)}-1}}{C_{\varphi(p^s)-1}}\} \cup \{\frac{\theta^m + \delta_m}{p^{k_m}C_m} ~|~ \varphi{(p^s)} \leq m \leq p^s-1\}$ is an integral basis of $K$.
\end{theorem}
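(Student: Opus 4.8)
The plan is to verify the two defining properties of an integral basis for the proposed set $B$: that all $p^s$ of its elements lie in the ring of integers $A_K$, and that the $\Z$-module $\langle B\rangle$ they span has discriminant equal to $d_K$. Since $B$ consists of $[K:\Q]=p^s$ elements that will be seen to be $\Q$-independent, $\langle B\rangle$ is a full-rank submodule of $A_K$ with $\mathrm{disc}(B)=[A_K:\langle B\rangle]^2\,d_K$; hence $\mathrm{disc}(B)=d_K$ forces the index to be $1$ and proves the claim for both $(i)$ and $(ii)$ at once.

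First, integrality. For $0\le m<\phi(p^s)$ one has $k_m=0$, and the remark preceding Lemma \ref{3.3} already shows each $\theta^m/C_m$ is an algebraic integer; this disposes of part $(i)$ entirely, since there $d=\min\{r,s\}\le 0$ forces every $k_m=0$ and the list reduces to $\{\theta^m/C_m\}$. For $\phi(p^s)\le m\le p^s-1$ we have $k_m\ge 1$, and here I would invoke Lemma \ref{3.4}, whose asserted algebraic integer $\frac{\theta^m+\delta_m}{p^{k_m}C_m}$ is literally the element in part $(ii)$. The one compatibility point to record is that the $a'_m$ prescribed in the theorem (congruent to $a^{(p-2)p^{s-k_m-1}}$ or $a^{p-2}$) agrees modulo $p^{k_m+1}$ with the $a'_m$ of Lemma \ref{3.4}: since $k_m\le r$, one has $a^{p-1}\equiv 1\pmod{p^{k_m+1}}$, whence $a^{p-2}\equiv a^{-1}\pmod{p^{k_m+1}}$, so the two prescriptions coincide.

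Second, the discriminant. Ordering $B$ by the exponent $m$ and the power basis as $1,\theta,\dots,\theta^{p^s-1}$, the change-of-basis matrix is lower triangular: the highest power of $\theta$ occurring in $\delta_m$ is $j_m+(p^{k_m}-2)p^{s-k_m}=m-p^{s-k_m}<m$, so the $m$-th element of $B$ has leading term $\theta^m/(p^{k_m}C_m)$ and only strictly lower terms (with the convention $k_m=0$ when $m<\phi(p^s)$). Its determinant is therefore $\prod_{m=0}^{p^s-1}(p^{k_m}C_m)^{-1}$, and with $n=p^s$,
$$\mathrm{disc}(B)=\Big(\prod_{m=0}^{p^s-1}p^{k_m}C_m\Big)^{-2}\,\mathrm{disc}(x^{p^s}-a),\qquad \mathrm{disc}(x^{p^s}-a)=(-1)^{(n-1)(n-2)/2}n^n a^{n-1}.$$
The sign agrees with that of $d_K$ immediately, so the real task is matching each prime valuation against Theorem 1.A. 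For a prime $q\ne p$ dividing $|a|$, with $t=v_q(|a|)$, the prime $q$ divides exactly one squarefree factor $a_t$, so $v_q\big(\prod_m C_m\big)=\sum_{m=0}^{p^s-1}\lfloor tm/p^s\rfloor$; feeding in the classical evaluation $\sum_{m=0}^{N-1}\lfloor tm/N\rfloor=\tfrac{t(N-1)}{2}-\tfrac{N-\gcd(t,N)}{2}$ gives $v_q(\mathrm{disc}(B))=(p^s-1)t-2\sum_m\lfloor tm/p^s\rfloor=p^s-\gcd(p^s,t)$, exactly the exponent $n-m_j$ of $q$ in $d_K$.

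The $p$-part is the main obstacle and the only place the hypothesis on $v_p(a)$ is used. Here I would first establish the combinatorial identity $\sum_{m=0}^{p^s-1}k_m=\sum_{j=1}^{d}p^{s-j}$, which follows from $\#\{m:k_m\ge K\}=p^{s-K}$ for $1\le K\le d$. Combining this with the floor-sum formula applied to $e=v_p(a)$ — where the coprimality $p\nmid e$ (when $e>0$) makes $\gcd(e,p^s)=1$ and hence $\sum_m\lfloor em/p^s\rfloor=\tfrac{(e-1)(p^s-1)}{2}$ — a short computation yields $v_p(\mathrm{disc}(B))=sp^s-2\sum_{j=1}^{d}p^{s-j}$ when $p\nmid a$, and $v_p(\mathrm{disc}(B))=sp^s+(p^s-1)-2\sum_{j=1}^{d}p^{s-j}$ when $p\mid a$. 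These must be reconciled with $d_K$: since $r\ge 0\iff p\nmid a$ and $r=-1\iff p\mid a$, in the first case $p$ enters $d_K$ only through $p^{v_1}$ with $v_1=sp^s-2\sum_{j=1}^{d}p^{s-j}$, while in the second case ($r=-1$, part $(i)$, where the displayed sum is empty) $p$ enters both through $p^{v_1}$ with $v_1=sp^s$ and as one of the $q_j=p$ contributing $p^{\,p^s-1}$, whose exponents add to $sp^s+(p^s-1)$. In every case the $p$-valuations agree as well, so $\mathrm{disc}(B)=d_K$, the index is $1$, and $B$ is an integral basis.
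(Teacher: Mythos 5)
Your proposal is correct, but it reaches the statement by a genuinely different route than the paper does. The paper's own proof of Theorem \ref{3.3'} is two lines long: it deduces the theorem as an immediate corollary of the general Theorem \ref{3.1}, using the footnoted equivalence $k_m\geq 1 \Leftrightarrow m\geq \phi(p^s)$ (so that $\beta_m=0$ below $\phi(p^s)$) together with Remark \ref{3.5'} --- which is exactly your ``compatibility point'' that $b'_m$ may be taken congruent to $a^{p-2}$ modulo $p^{k_m+1}$ because $a^{p-1}\equiv 1~(mod~p^{k_m+1})$ when $k_m\leq r$. You instead prove the prime-power case from scratch, and in doing so you essentially replicate the paper's proof of Theorem \ref{3.1} specialized to $n=p^s$: integrality of $\frac{\theta^m+\delta_m}{p^{k_m}C_m}$ via Lemma \ref{3.4}, the triangular transition matrix with determinant $\prod_m p^{k_m}C_m$, the count $\sum_m k_m=\sum_{j=1}^{d}p^{s-j}$ (the paper obtains this by summing $j\left(\frac{n}{p^j}-\frac{n}{p^{j+1}}\right)$ over the partition intervals, you by counting the level sets $\{m : k_m\geq K\}$ --- both are fine), the floor-sum evaluation (your closed form is algebraically identical to the paper's Lemma 3.A), and the final comparison of each prime valuation with Theorem 1.A. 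What your route buys: the prime-power theorem becomes self-contained, and it makes visible that none of the Chinese-remainder apparatus of the general case ($S_m$, $z_{i,m}$, $u_{i,m}$, the B\'ezout combination defining $\beta_m$) is needed when $n$ has a single prime divisor, since then $S_m$ is a singleton and $\beta_m=\delta_m$; you also spell out the $p\mid a$ (equivalently $r=-1$) bookkeeping, where the hypothesis $p\nmid v_p(a)$ enters precisely through $\gcd(v_p(a),p^s)=1$, a detail the paper handles once inside the $S=\emptyset$ branch of Theorem \ref{3.1}. What the paper's route buys is economy: Theorem \ref{3.1} is proved in full generality anyway, so the corollary costs nothing. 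All the individual verifications in your argument (the bound $\deg_\theta\delta_m = m-p^{s-k_m}<m$, the exponent $n-\gcd(t,n)$ for primes $q\neq p$, and the matching of $v_p$ with $v_1$ from Theorem 1.A in both the $r\geq 0$ and $r=-1$ cases) check out.
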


Theorem \ref{3.3'} will be deduced as a corollary to our main theorem which will be stated after giving some examples illustrating Theorem \ref{3.3'}.

\begin{example} Let $K = \Q(\theta)$, where  $\theta$ is a root of $ x^8 - 17\cdot 7^2$ and $A_K$ be the ring of algebraic 
	integers of $K$.
	We retain the notations of Theorem \ref{3.3'}. 
	Here  $ a = 17\cdot 7^2, p = 2, s = 3, r \geq 5$ as $a\equiv 1~(mod~ 2^6)$. One can verify that $C_m=1 \text{ or } 7 $ according as $0\leq m\leq 3$ or $4\leq m\leq 7$. It can be easily checked that $k_{0} = k_{1} = k_{2} = k_{3} = 0,$ 
	$k_{4} = k_{5} = 1, k_{6} = 2,k_{7} = 3, j_4=0, j_5=1, j_6=0$ and $ j_7=0$. Keeping in mind $a\equiv 1 ~(mod ~2^6)$ we see that  $a'_{m} = 1$ works for $0\leq m \leq 7$. 
	Accordingly one can choose $w_{4} = w_{5} = 1$, $w_{6} = 3$ and $w_{7} = 7$. Therefore with $\delta_m$ as defined in Theorem \ref{3.3'}, we have  $\delta_4=7, \delta_5= 7\theta, \delta_6 = 21(\theta^4 +\theta^2+ 1)$ and $\delta_7= 49(\sum_{j=0}^{6} \theta^j)$. Substituting for $C_m, k_m$ and $\delta_m$ in the integral basis described in   Theorem $\ref{3.3'}$, we see that  the set  
	$\{1,\theta,
	\theta^2,\theta^3,\frac{\theta^4+7}{14},\frac{\theta^5+7\theta}{14},\frac{\theta^6+21(\theta^4 +\theta^2 +1)}
	{28}, \frac{\theta^7 + 49(\sum\limits_{j=0}^{6} \theta^j)}{56} \}$  is an integral basis of $K$. 
\end{example}

\begin{example} Let $K = \Q(\theta)$ with  $\theta$ a root of $x^9 - 26$. 
	With notations as in Theorem \ref{3.3'}, here $ p = 3, s = 2, a = 26$ and $r\geq 2$. Since $a$ is a squarefree integer, $|a|=a_1$ and hence $C_m = a_1^{\lfloor \frac{m}{9}\rfloor} =1$ for each $m\leq 8$. It can be easily checked that $k_{m} = 0$ for $0\leq m \leq 5$, $k_{6} = k_{7} = 1,~k_{8} = 2,~ j_6=0, ~j_7=1$ and $j_8=0$. Here $a'_{m} = -1$  works for $m = 6, 7, 8$ and accordingly we may choose $w_{m} = 1$ for these values of $m$. Thus we have $\delta_6 = -\theta^3 +1, \delta_7 = -\theta^4 +\theta$ and $\delta_8 = \sum_{j=0}^7 (-\theta)^j$. Therefore by Theorem \ref{3.3'},  
	$\{1,\theta,\theta^2,\theta^3,\theta^4,\theta^5,\frac{\theta^6 - \theta^3 +1}{3}, \frac{\theta^7 - \theta^4 +\theta}{3},\frac{\theta^8 + \sum\limits_{j=0}^{7} (-\theta)^j }{9} \}$  is an integral basis of $K$.
\end{example}

Before explicitly stating an integral basis of $\Q(\sqrt[n]{a})$, we introduce some notation. \\
Let $K=\mathbb Q(\theta)$ be an algebraic number field with $\theta$  a root of an irreducible polynomial  $ x^{n} - a$ belonging to $\mathbb Z[x]$ where $|a|$ is a $n$-th power free integer which is expressed as $\prod_{j=1}^{n-1} a_j^j$, $a_j$ being squarefree numbers relatively prime in pairs.  Let $n=\prod\limits_{i=1}^k p_i ^{s_i}$, $|a|= \prod_{j=1}^l q_j^{t_j}$ be the prime factorizations of $n$, $|a|$ and let  $r_i$ be as in Theorem $1.A$.   For $0 \leq m \leq n-1$,
$C_m$ will stand for the number $\prod\limits_{j=1}^{n-1}a_j^{\lfloor\frac{jm}{n}\rfloor}$. We shall denote by 
$S$ the set $\{i  ~|~  1\leq i \leq k, r_i \geq 1\}$. For $i \in S$,  denote $\min\{r_i, s_i\}$ by $d_i$. Note that for such an index $i$, $p_i$ does not divide $a$, for otherwise $r_i = v_{p_i}(a^{p_i-1}-1)-1$ = $-1 < 0$. Since the interval $[0, n-1]$ can be 
partitioned into the union $\cup_{k_i = 0}^{d_i - 1}[n - \frac{n}{p_i^{k_i}}, n - \frac{n}{p_{i}^{k_i+1}}) \cup [n - 
\frac{n}{p_i^{d_i}}, n-1]$ of pairwise disjoint intervals, it follows that given any non-negative integer $m$ not 
exceeding $n-1$, it belongs to one of these intervals and hence there exists a largest integer $k_{i,m}, 0 \leq k_{i,m} \leq d_i$ and a non-negative integer 
$j_{i,m}$ such that $m = n - \frac{n}{p_i^{k_{i,m}}} + j_{i,m}$. For $0\leq m\leq n-1$, let $S_m$ denote the subset of $S$ defined by $S_m = \{i \in S ~|~ k_{i,m} \geq 1\}$. Fix a pair $(i,m)$ with $i\in S_m$.  Choose an integer $b'_{i,m}$ such that $ab'_{i,m} 
\equiv 1~\mbox{mod}(p_i^{k_{i,m}+1})$ and set $a'_{i,m} = (b'_{i,m})^{p_i^{s_i-1-k_{i,m}}}$ or $b'_{i,m}$ according as 
$1\leq k_{i,m} < s_i$ or $k_{i,m} = s_i$. Corresponding to the pair $(i,m)$, denote the integer $ 
\frac{n}{p_i^{k_{i,m}}}$ by $n_{i,m}$. Let  $w_{i, m}$  be an integer such that  $w_{i, m}C_{m} 
(a'_{i,m})^{p_i^{k_{i,m}}-1} \equiv 1 (mod~ p_i^{k_{i,m}}) $; such $w_{i,m}$ exists because $p_i \nmid 
C_ma'_{i,m}$.  
 Define $\delta_{i,m}$ belonging to $\Z[\theta]$ by $\delta_{i,m} = w_{i,m}C_m\theta^{j_{i,m}}\sum\limits_{r=0}^{p_i^{k_{i,m}}-2}(a'_{i,m}\theta^{n_{i,m}})^{r}$.
 Let $z_{i,m}$ 
stand for the number $\prod\limits_{j\in S_m\backslash\{i\}}p_j^{k_{j,m}}$ for $i \in S_m$. Since $\gcd\{z_{i,m} ~|~ i \in S_m\}$ = 1, there exist integers $u_{i,m}$ such that 
$\sum\limits_{i\in S_m}u_{i,m} z_{i,m} = 1$. Let $\beta_m$ equal
$ \sum\limits_{i \in S_m}u_{i,m} z_{i,m}\delta_{i,m}$ or $0$ according as $S_m \neq \emptyset$ or $S_m = \emptyset$.  Observe that the highest power of $\theta$ occuring in $\delta_{i,m}$ is $j_{i,m}+n-2n_{i,m} = m-n_{i,m}<m$ and hence the same holds for $\beta_m$. 
Note that when $S_m$ is a singleton set consisting of $\{i\}$, then $z_{i,m}$ being an empty product is $1$ and $\beta_m= \delta_{i,m}$.

\indent With the above notations, we shall  prove the following theorem. \vspace*{-3mm}
\begin{theorem}\label{3.1} Let $K = \Q(\theta)$ with $\theta$ having minimal polynomial $x^n - a$ over $\Q$ where 
$a$ is an $n$-th power free integer and for every prime $p_i$ dividing $n$ either $p_i \nmid a$ or $v_{p_i}(a)$ is 
coprime to $p_i$. Let $n = \prod\limits_{i=1}^{k}p_i^{s_i}, S, C_m, k_{i,m}$ and $\beta_m$ be as in the 
above paragraph.  Then the following hold:\\
  $(i)$ If $S=\emptyset$, then $ \left\lbrace  \dfrac{\theta^m}{C_m } ~\bigg\vert~ 0\leq m\leq n-1  \right\rbrace $ is an integral basis of $K$.   \\
  $(ii)$ If $S\neq \emptyset$, then $ \left\lbrace 1,  \dfrac{\theta^m +\beta_m}{C_m \prod\limits_{i\in S} p_i ^{k_{i,m}}} ~\bigg\vert~ 1\leq m\leq n-1  \right\rbrace $ is an integral basis of $K$. 
\end{theorem}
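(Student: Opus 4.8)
The plan is to reduce the general case to the prime-power case (Theorem 1.1) by a local-global argument, using the fact that the ring of integers $A_K$ is determined prime by prime. First I would verify that each candidate element listed in the proposed basis is genuinely an algebraic integer: Lemma 1.1 and the remark about $\frac{\theta^m}{C_m}$ preceding it handle the ``denominator $C_m$'' part, while Lemma 1.2 (applied separately at each prime $p_i$ with $i \in S_m$) handles the ``denominator $p_i^{k_{i,m}}$'' part. The role of $\beta_m = \sum_{i\in S_m} u_{i,m} z_{i,m}\delta_{i,m}$ is to glue together, via the Chinese-Remainder-type coefficients $u_{i,m}$ with $\sum_{i\in S_m} u_{i,m}z_{i,m}=1$, the local correction terms $\delta_{i,m}$ so that $\frac{\theta^m+\beta_m}{C_m\prod_{i\in S}p_i^{k_{i,m}}}$ is simultaneously $p_i$-integral for every $i$; here I would check that for a fixed prime $p_i$ the contributions of the other primes $p_j$ ($j\neq i$) carry a factor $z_{i,m}$ divisible by $p_i^{0}$ but that the term $u_{i,m}z_{i,m}\delta_{i,m}$ supplies exactly the correction predicted by Lemma 1.2 modulo $p_i^{k_{i,m}}$, the remaining summands being $p_i$-integral already. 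Note also that when $i\notin S$ we have $k_{i,m}=0$, so such primes contribute nothing to the denominator.

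Second, and this is where the bulk of the work lies, I would show the proposed set is not merely contained in $A_K$ but equals it, i.e.\ that it is an \emph{integral basis}. The natural way is a discriminant computation: the set is manifestly a $\Q$-basis of $K$ (the element indexed by $m$ has leading term $\theta^m$ with a unit coefficient, since the highest power of $\theta$ in $\beta_m$ is $m-n_{i,m}<m$, so the transition matrix from $\{1,\theta,\dots,\theta^{n-1}\}$ is triangular), and I would compute the index of the lattice it spans inside $\Z[\theta]$. That index is
\[
\prod_{m} C_m \prod_{i\in S} p_i^{k_{i,m}},
\]
again by triangularity. I would then compare the discriminant of the proposed lattice, namely $\mathrm{disc}(x^n-a)$ divided by the square of this index, against the formula for $d_K$ furnished by Theorem 1.A. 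Matching these exactly forces the proposed lattice to be all of $A_K$.

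The main obstacle will be the bookkeeping in the index/discriminant comparison: one must show that $\sum_m \big(v_{p_i}(C_m) + k_{i,m}\big)$ equals precisely half the $p_i$-adic valuation of $\mathrm{disc}(x^n-a)/d_K$ for each prime $p_i$, and separately that for primes $q_j \mid a$ with $q_j \nmid n$ the contribution $\sum_m v_{q_j}(C_m)$ accounts for the factor $q_j^{n-m_j}$ in $d_K$. The first of these requires summing $k_{i,m}$ over the partition $[0,n-1]=\bigcup_{k_i}[n-\frac{n}{p_i^{k_i}},\,n-\frac{n}{p_i^{k_i+1}})$ described before the statement—on the block where $k_{i,m}=k_i$ there are $\frac{n}{p_i^{k_i}}-\frac{n}{p_i^{k_i+1}}=n_i(p_i^{s_i-k_i}-p_i^{s_i-k_i-1})$ indices, so the sum telescopes into exactly the exponent $v_i = ns_i - 2n_i\sum_{j=1}^{\min\{r_i,s_i\}}p_i^{s_i-j}$ appearing in Theorem 1.A. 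I expect this telescoping, together with the independence of the corrections at distinct primes (which lets one treat $S$ one prime at a time, exactly as Theorem 1.1 treats a single prime), to be the crux; once the valuations match at every prime, minimality of $d_K$ relative to any order gives the result.
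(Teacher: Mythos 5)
Your proposal is correct and follows essentially the same route as the paper's own proof: integrality of each $\gamma_m=\frac{\theta^m+\beta_m}{C_m\prod_{i\in S}p_i^{k_{i,m}}}$ via Lemma \ref{3.4} at each prime $i\in S_m$ glued by the coefficients $u_{i,m}$ with $\sum_{i\in S_m}u_{i,m}z_{i,m}=1$ (the paper does this in one line, writing $\gamma_m=\sum_{i\in S_m}u_{i,m}\frac{\theta^m+\delta_{i,m}}{C_m p_i^{k_{i,m}}}$ as a $\Z$-linear combination of algebraic integers, so your prime-by-prime valuation check is not even needed), followed by the triangular transition matrix, the telescoping sum $\sum_{m=1}^{n-1}k_{i,m}=n_i\sum_{j=1}^{d_i}p_i^{s_i-j}$, the floor-sum identity (Lemma 3.A) for $\prod_m C_m$, and the comparison with the discriminant formula of Theorem 1.A. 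The only cosmetic discrepancy is your opening framing of ``reducing to the prime-power case'': in the paper the logical order is the reverse (Theorem \ref{3.3'} is deduced from Theorem \ref{3.1}), but since your detailed steps never actually invoke Theorem \ref{3.3'}, this does not affect the argument.
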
      

\begin{remark}\label{3.5'} In the paragraph preceding Theorem \ref{3.1}, note that $b'_{i,m}$ can be chosen to be any integer which is congruent to $a^{p_i-2}$ modulo  $p_i^{k_{i,m}+1}$ because $a^{p_i -1} \equiv 1~(mod~p_i^{r_i+1})$ and $k_{i,m} \leq r_i$.
\end{remark}

The following theorem which improves the results of Theorems 2, 3 of \cite{Ga-Re} will be quickly deduced from the above theorem.
\begin{theorem}\label{3.new}
Let $n\geq 2$ be an integer with prime divisors $p_1, \cdots, p_k$. Let $K = \Q(\theta), K' = \Q(\theta')$ be algebraic number fields of degree $n$, where $\theta, \theta'$ satisfy respectively the polynomials $x^n - a$, $x^n - a'$  with integers $a,a'$ squarefree which are congruent modulo $np_1\cdots p_k$, then an element $\frac{1}{q}(c_0 + c_1\theta + \cdots + c_{n-1}\theta^{n-1})$ with $c_i, q \in \Z$ is in $A_K$ if and only if $\frac{1}{q}(c_0 + c_1\theta' + \cdots + c_{n-1}\theta'^{n-1})$ is in $A_{K'}.$
\end{theorem}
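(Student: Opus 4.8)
The plan is to prove Theorem \ref{3.new} by reducing it to the explicit integral basis furnished by Theorem \ref{3.1} and verifying that every ingredient entering that basis depends on $a$ only through its residue class modulo $np_1\cdots p_k$. Since $a, a'$ are both squarefree, for each prime $p_i \mid n$ we have $v_{p_i}(a), v_{p_i}(a') \in \{0,1\}$, so the hypotheses of Theorem \ref{3.1} hold for both fields. The key observation is that the congruence $a \equiv a' \pmod{np_1\cdots p_k}$ forces all the combinatorial and arithmetic data attached to the two fields to coincide, so that the two integral bases have \emph{identical} coefficient patterns.

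First I would unwind the data in Theorem \ref{3.1} one piece at a time and check its dependence on $a$. The factorization type needed is that of $n = \prod p_i^{s_i}$, which is fixed. The numbers $C_m = \prod_j a_j^{\lfloor jm/n\rfloor}$ depend on the squarefree parts $a_j$ of $a$; here I would argue that since $a, a'$ are squarefree and congruent modulo $np_1\cdots p_k$, they have the same sign and the same decomposition pattern relevant to $C_m$ modulo the primes $p_i$ --- more precisely, the exponents $k_{i,m}$ and $j_{i,m}$ come only from $m, n$ and $p_i$, hence are literally the same for both fields. The essential arithmetic input is $r_i = v_{p_i}(a^{p_i-1}-1)-1$, and here the congruence $a\equiv a'\pmod{np_1\cdots p_k}$ must be shown to preserve the set $S$ and the truncated exponents $d_i=\min\{r_i,s_i\}$; the point is that one only ever uses $r_i$ through $d_i\le s_i$, and $a\equiv a'$ modulo a power of $p_i$ high enough to read off $v_{p_i}(a^{p_i-1}-1)$ up to the cutoff $s_i+1$.

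Next I would track the element $\beta_m$. By Remark \ref{3.5'} the integer $b'_{i,m}$ may be taken to be $a^{p_i-2}\bmod p_i^{k_{i,m}+1}$, and since $k_{i,m}+1\le s_i+1$ and $p_i^{s_i}\mid np_1\cdots p_k$, the congruence $a\equiv a'$ gives $a^{p_i-2}\equiv a'^{\,p_i-2}\pmod{p_i^{k_{i,m}+1}}$, so one may make the \emph{same} choice of $a'_{i,m}$ and then of $w_{i,m}$ and $u_{i,m}$ in both fields. Consequently $\delta_{i,m}$, and hence $\beta_m = \sum_{i\in S_m} u_{i,m}z_{i,m}\delta_{i,m}$, are given by the \emph{same} integer-coefficient polynomial expressions in $\theta$ versus $\theta'$. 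Therefore the integral basis of Theorem \ref{3.1}$(ii)$ for $K$ is obtained from that of $K'$ simply by replacing $\theta'$ with $\theta$, with identical denominators $C_m\prod_{i\in S}p_i^{k_{i,m}}$ and identical numerators as polynomials.

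Once this is established, the conclusion is immediate. Writing $\frac{1}{q}(c_0+\cdots+c_{n-1}\theta^{n-1})$ in terms of the basis $\{1\}\cup\{\frac{\theta^m+\beta_m}{C_m\prod_i p_i^{k_{i,m}}}\}$ expresses membership in $A_K$ as a system of integrality (divisibility) conditions on the $c_m$, and because the basis transition matrix relating the power basis to the integral basis has the same integer entries for $K$ and $K'$, the very same conditions characterize membership in $A_{K'}$; the case $S=\emptyset$ is handled the same way using part $(i)$. I expect the main obstacle to be the careful verification in the second paragraph that $a\equiv a'\pmod{np_1\cdots p_k}$ genuinely forces $S=S'$, $d_i=d_i'$ and the equality of all the $k_{i,m}$, since $r_i$ itself can differ between $a$ and $a'$ --- the argument must exploit that only the \emph{truncated} value $\min\{r_i,s_i\}$ and residues modulo $p_i^{s_i+1}$ ever enter, and that the modulus $np_1\cdots p_k$ supplies exactly the factor $p_i^{s_i+1}$ worth of agreement needed (one extra factor of $p_i$ beyond $p_i^{s_i}$), which is precisely why the period is $np_1\cdots p_k$ rather than $n$ itself.
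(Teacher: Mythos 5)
Your proposal is correct and takes essentially the same route as the paper: it reduces to the explicit basis of Theorem \ref{3.1} with $C_m=1$ (as $a$ is squarefree), makes identical choices of the $a'_{i,m}$ via Remark \ref{3.5'}, and rests on the observation that congruence modulo $np_1\cdots p_k$, i.e.\ modulo $p_i^{s_i+1}$ for each $i$, forces $\min\{r_i,s_i\}=\min\{r'_i,s_i\}$ --- precisely the equation $(\ref{ans})$ that the paper isolates as the crux, which it verifies via $v_{p}(a^{p}-a)$ with a case split on $r<s$ versus $r\geq s$, while you read off $v_{p_i}(a^{p_i-1}-1)$ up to the cutoff $s_i+1$ directly, an equivalent computation. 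The only blemish is your incidental claim that $a$ and $a'$ ``have the same sign,'' which neither follows from the congruence nor is needed, since the sign of $a$ never enters the basis data.
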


\noindent We now provide some examples to illustrate Theorem \ref{3.1}.

\begin{example} Let $\theta$ be a root of $x^{10} - 150$ and $K = \Q(\theta)$. With notations  as in Theorem $\ref{3.1}$, it can be easily seen that here $S=\emptyset$,  $C_m=1$ for $0\leq m\leq 4$ and $C_m=5$ for $5\leq m \leq 9$.   Hence $\{1,\theta,\theta^2,{\theta^3},\theta^4,\frac{\theta^5}{5}, \frac{\theta^6}{5}, \frac{\theta^7}{5}, \frac{\theta^8}{5}, \frac{\theta^9}{5}\}$ is an integral basis of $K$ by Theorem $\ref{3.1}(i)$. \end{example}

\begin{example} Let $K = \Q(\theta)$, where  $\theta$ is a root of $x^6 - 2\cdot 5^2\cdot 13^5$. We retain the notations introduced  in the paragraph preceding Theorem $\ref{3.1}$ and take $p_1 = 3, p_2 = 2, a=2\cdot 5^2\cdot 13^5$. One can verify that $C_1 = 1, C_2 = 13, C_3 = 5\cdot 13^2, C_4 = 5\cdot 13^3, C_5 = 5\cdot 13^4$. As can be easily seen $v_3(a^2 - 1) = r_1 + 1 \geq 2$ and $v_2(a-1) = r_2+1 = 0$. Therefore $S = \{1\}$. One can quickly verify that $k_{1,m} = 0$ for $0 \leq m \leq 3$ and $k_{1,m} = 1$ for $m = 4,5$; also $j_{1,4}=0, j_{1,5} =1$. Here $b'_{1, m} = -1$ works for $m = 4, 5$ and in view of this choice we may take $a'_{1,m} = -1$ for $m = 4,5$. We may take $w_{1,m}=2$ for $m=4,5$. Since $S=\{ 1\}$ is a singleton, we have $\beta_m= \delta_{1,m}$ for $m=4,5$.  Substituting these values, we see that $\beta_4= 10\cdot 13^3 (-\theta^2 +1) $ and $\beta_5=10\cdot 13^4  (-\theta^3 
	+\theta)$. Therefore by Theorem $\ref{3.1}(ii)$ that $\{1,\theta,\frac{\theta^2}{13},\frac{\theta^3}{5\cdot 
 13^2},\frac{\theta^4 + 10\cdot 13^3 (-\theta^2 +1) }{15 \cdot 13^3},\frac{\theta^5+ 10\cdot 13^4  (-\theta^3 
 +\theta)}{15 \cdot 13^4} \}$ is an integral basis of $K$. \end{example}
 
\begin{example} Let $\theta$ be a root of $x^{6} - 37$ and $K = \Q(\theta)$. We retain the notations  introduced in the paragraph preceding Theorem $\ref{3.1}$ we take $p_1 = 3, p_2 = 2$ and $a=37$. It is clear that  $v_3(a^2 - 1) = r_1 + 1 = 2$ and $v_2(a-1) = r_2+1 = 2$. Therefore $S = \{1, 2\}$. One can easily verify 
 that $k_{1,m} = 0$ for $0 \leq m \leq 3$, $k_{1,m} = 1$ for $m = 4,5$, $j_{1,4}=0 $ and $j_{1,5}=1$. Here $b'_{1, m} = 1$ works for $m = 4, 
 5$ and so we may choose $a'_{1,m} = 1$ for $m = 4,5$. Since $a$ is squarefree, $C_m = 1$ for all $m$; we 
 may take $w_{1,m} = 1$ for $m = 4,5$. So $\delta_{1,4} = \theta^2 + 1$ and $\delta_{1,5} = \theta^3+\theta$. 
 Further $k_{2,m} = 0$ for $0 \leq m \leq 2$, $k_{2,m} = 1$ for $m = 3,4,5$, $j_{2,3}=0,~j_{2,4}=1 $ and $j_{2,5}=2$. Here $b'_{2, m} 
 = 1$ works for $m = 3, 4, 5$ and in view of this choice $a'_{2, m} = 1$ for $m = 3, 4,5$. As $C_m = 1$ for all 
 $m$; we may take $w_{2,m} = 1$ for $m = 3, 4, 5$. Thus here $\delta_{2,3} = 1, \delta_{2,4} = \theta$ and 
 $\delta_{2,5} = \theta^2$. Note that $S_m = \emptyset$ for $0\leq m\leq 2$, $S_3 = \{2\}$ and $S_m = \{1,2\}$ 
 for $m = 4,5$. Here $z_{2,3} = 1$, $z_{1,m} = 2$ for $m = 4,5$ and $z_{2,m} = 3$ for $m = 4,5$, therefore we can 
 take $u_{2,3} = 1$ and $u_{1,m} = -1, u_{2,m} = 1$ for $m = 4,5$. Note that $\beta_1=\beta_2 =0$, $\beta_3= \delta_{2,3}$ and $ \beta_m = u_{1,m}z_{1,m} \delta_{1,m}+ u_{2,m}z_{2,m} \delta_{2,m}$ for $m=4,5$. Substituting these values, it follows quickly 
 from Theorem $\ref{3.1}(ii)$ that $\{1,\theta, \theta^2,\frac{\theta^3+1}{2}, \frac{\theta^4 -2\theta^2 + 
 3\theta-2}{6},\frac{\theta^5-2\theta^3 + 3\theta^2-2\theta}{6} \}$ is an integral basis of $K$.  
\end{example}

\section{Proof of Lemmas \ref{3.3} and \ref{3.4}.}

\textit{Proof of Lemma \ref{3.3}.}
Multiplying $\ITA = 1 + (a' \TNPIA) + \cdots + (a'\TNPIA)^{\PKA-1}$ by  $(a' \TNPIA - 1)$ on both sides and using the fact that $\theta^{n'p^k} = a$, we see that
$$a'\TNPIA\ITA = \ITA+a(a')^{\PKA} - 1.$$
Now taking $\PKA$-th power on both sides, we obtain
$$a(a')^{\PKA} \ITPA = \sum\limits_{j=0}^{\PKA}\binom{\PKA}{j}\ITA^{p^{k}-j}\left(a(a')^{\PKA} - 1\right)^{j},$$
which  can be rewritten as 
\begin{equation}\label{eq:321a}
\left(a(a')^{\PKA} - 1\right)\ITPA = \sum\limits_{j=1}^{\PKA} \binom{\PKA}{j}\left(a(a')^{\PKA} - 1\right)^{j}\ITA^{p^{k}-j}.
\end{equation}
Note that  $a(a')^{\PKA} \neq 1$, for otherwise $a = \pm 1$ when $p$ is odd and $a = 1$ when $p$ = $2$ which is impossible in view of irreducibility of $x^n - a$ over $\Q$. On dividing $(\ref{eq:321a})$ by $a(a')^{\PKA} - 1$, we have
\begin{equation*}
\ITA^{\PKA} = \sum\limits_{j=1}^{\PKA}{\binom{\PKA}{j}\left(a(a')^{\PKA} - 1\right)^{j-1}}{\ITA}^{\PKA-j}.
\end{equation*}
Thus $\ITA$ satisfies the polynomial $x^{p^{k}} - \sum\limits_{j=1}^{\PKA}{\binom{\PKA}{j}\left(a(a')^{\PKA} - 1\right)^{j-1}}x^{\PKA-j}$; consequently $\dfrac{\ITA}{\PKA} $ satisfies the polynomial $x^{p^{k}} - \sum\limits_{j=1}^{\PKA}\dfrac{\binom{\PKA}{j}\left(a(a')^{\PKA} - 1\right)^{j-1}}{(\PKA)^{j}} x^{\PKA-j}$ = $h(x)$ (say).
In view of the following Lemma \ref{3.31}, $h(x) \in \Z[x]$ and so $\frac{\ITA}{p^{k}}$ is an algebraic integer as desired.
      
\begin{lemma}\label{3.31}
Let $x^n - a$, $p, s, r, k$ and $a'$ be as in the above lemma. Then $p^{jk}$ divides $\binom{\PKA}{j}\left(a(a')^{\PKA} - 1\right)^{j-1}$ for $1 \leq j \leq \PKA$.
\end{lemma}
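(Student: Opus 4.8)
The plan is to restate the divisibility as a lower bound on a $p$-adic valuation and to split it into two independent estimates. Writing $v_p$ for the $p$-adic valuation, the assertion $p^{jk}\mid\binom{p^k}{j}\bigl(a(a')^{p^k}-1\bigr)^{j-1}$ is exactly
\[
v_p\!\left(\binom{p^k}{j}\right)+(j-1)\,v_p\!\left(a(a')^{p^k}-1\right)\ \geq\ jk.
\]
The first term I would evaluate by the standard identity $v_p\binom{p^k}{j}=k-v_p(j)$ for $1\leq j\leq p^k$; the cleanest route is to write $\binom{p^k}{j}=\frac{p^k}{j}\binom{p^k-1}{j-1}$ and observe that $\binom{p^k-1}{j-1}\equiv(-1)^{j-1}\pmod p$ by Lucas' theorem, so it is prime to $p$. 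For the second term I would prove the key estimate $v_p\bigl(a(a')^{p^k}-1\bigr)\geq k+1$. Granting both, the left-hand side is at least $(k-v_p(j))+(j-1)(k+1)=jk+(j-1-v_p(j))$, and since $p^{v_p(j)}\leq j\leq p^{\,j-1}$ gives $v_p(j)\leq j-1$ for every $j\geq1$, the bound $\geq jk$ follows at once.

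The heart of the argument, and the step I expect to be the main obstacle, is the estimate $v_p\bigl(a(a')^{p^k}-1\bigr)\geq k+1$, into which the hypotheses $p\nmid a$, $r=v_p(a^{p-1}-1)-1$ and $1\leq k\leq\min\{r,s\}$ all feed. From the definition of $a'$ one checks directly that $(a')^{p^k}=(b')^{p^{e}}$, where $e=s-1$ when $k<s$ and $e=s$ when $k=s$; in either case $e\geq1$. Multiplying and dividing by $a^{p^{e}-1}$ then gives
\[
a(a')^{p^k}-1=\frac{(ab')^{p^{e}}-a^{\,p^{e}-1}}{a^{\,p^{e}-1}},
\]
and since $p\nmid a$ the denominator is a $p$-adic unit, so $v_p\bigl(a(a')^{p^k}-1\bigr)=v_p\bigl((ab')^{p^{e}}-a^{p^{e}-1}\bigr)$. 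I would bound the numerator by writing it as $\bigl((ab')^{p^{e}}-1\bigr)-\bigl(a^{p^{e}-1}-1\bigr)$ and estimating each bracket separately.

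For the first bracket, $ab'\equiv1\pmod{p^{k+1}}$ gives $v_p(ab'-1)\geq k+1$, and raising to the $p^{e}$-th power adds exactly $e$ by the lifting-the-exponent lemma, so $v_p\bigl((ab')^{p^{e}}-1\bigr)\geq k+1+e$. For the second bracket, writing $p^{e}-1=(p-1)M$ with $M=1+p+\cdots+p^{e-1}$ coprime to $p$ yields $a^{p^{e}-1}=(a^{p-1})^{M}$, whence $v_p\bigl(a^{p^{e}-1}-1\bigr)=v_p(a^{p-1}-1)=r+1$. Taking the minimum of the two valuations, both of which are at least $k+1$ because $e\geq0$ and $r\geq k$, gives $v_p\bigl(a(a')^{p^k}-1\bigr)\geq k+1$, as required. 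The one point to handle with care is the prime $p=2$: there the lifting-the-exponent step requires $4\mid ab'-1$, which holds because $k\geq1$ forces $v_2(ab'-1)\geq2$, while the second bracket is controlled using that $M=2^{e}-1$ is odd.
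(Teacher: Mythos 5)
Your proof is correct, and its overall skeleton coincides with the paper's: both reduce the claim to the valuation $v_p\binom{p^k}{j}=k-v_p(j)$ together with the key congruence $a(a')^{p^k}\equiv 1 \pmod{p^{k+1}}$, and both close with the same counting inequality (your $v_p(j)\leq j-1$ is the paper's $j\geq w+1$ where $p^w\leq j<p^{w+1}$). Where you genuinely diverge is the proof of the key congruence. The paper argues by pure congruence manipulation: from $v_p(a^p-a)=r+1\geq k+1$ it gets $a^p\equiv a\pmod{p^{k+1}}$, iterates to $a^{p^{s-1}}\equiv a\pmod{p^{k+1}}$, multiplies through by $(b')^{p^{s-1}}$ and invokes $ab'\equiv 1\pmod{p^{k+1}}$, with a separate short case for $k=s$. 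You instead collapse $(a')^{p^k}$ to $(b')^{p^e}$, rewrite $a(b')^{p^e}-1$ up to a $p$-adic unit as $\bigl((ab')^{p^e}-1\bigr)-\bigl(a^{p^e-1}-1\bigr)$, and bound each bracket by lifting the exponent. Your route is uniform in $k$ (the case split is absorbed into the definition of $e$) and yields exact valuations ($\geq k+1+e$, respectively $=r+1$) where the paper only records a congruence; the cost is importing LTE and the attendant $p=2$ caveat, which you correctly discharge via $k\geq 1\Rightarrow 4\mid ab'-1$. Note also that LTE is more than you need for the first bracket: simply raising $ab'\equiv 1\pmod{p^{k+1}}$ to the $p^e$-th power already gives $v_p\bigl((ab')^{p^e}-1\bigr)\geq k+1$, which suffices, and for the second bracket the elementary factorization $x^M-1=(x-1)(x^{M-1}+\cdots+1)$ with cofactor prime to $p$ does the job without any named lemma --- the paper's argument stays at this elementary level throughout, in keeping with its stated self-contained style.
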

\begin{proof}
We first prove that \begin{equation}\label{eq:3.31a}
a(a')^{p^k} \equiv 1~(mod~p^{k+1}).
\end{equation} 
Keeping in mind that $r+1  = v_p(a^p - a)$ and the fact that $k \leq r$, we have
\begin{equation}\label{eq:l2}
 a^{p} \equiv a~(mod~p^{k+1}).
\end{equation}
 The proof of $(\ref{eq:3.31a})$ is split into two cases.
 Consider first the case when $k < s$. 
It follows from $(\ref{eq:l2})$  that
$
a^{p^{s-1}} \equiv a~(mod~p^{k+1}).
$ Multiplying the last congruence by $(b')^{p^{s-1}}$ on both sides, we have $(ab')^{p^{s-1}} \equiv a(b')^{p^{s-1}}~(mod~p^{k+1})$. By choice $ab' \equiv~1~(mod~p^{k+1})$; consequently using the last two congruences, we see that $1 \equiv a((b')^{p^{s-k-1}})^{p^k}~(mod~p^{k+1})$. Recall that in the present case $a' = (b')^{p^{s-k-1}}$. So $a(a')^{p^k} \equiv 1~(mod~p^{k+1})$ proving $(\ref{eq:3.31a})$ in this case.
When $k = s$, then by choice $a' = b'$ which in view of the hypothesis  $ab' \equiv~1(mod~p^{k+1})$ and $(\ref{eq:l2})$  implies that  
 $1 \equiv (ab')^{p^k} \equiv a(a')^{p^k}(mod~p^{k+1})$ and hence  $(\ref{eq:3.31a})$ is proved.

Let $j$ be an integer such that $1 \leq j \leq p^k$. Fix a non-negative integer $w$ for which  $p^{w} \leq j  < p^{w+1}$. By a basic result, the highest power of $p$ dividing $\binom{p^k}{j}$ is $k-v_p(j)$, where $v_p(j)$ stands for the highest power of $p$ dividing $j$.  So $p^{k-w}$ divides $\binom{p^k}{j}$; consequently by virtue of $(\ref{eq:3.31a})$ the lemma is proved once we show that $jk \leq k - w + (j - 1)(k + 1),$ i.e., $0\leq j - w - 1$, which clearly holds in view of the choice of $w$.
\end{proof}

\noindent\textit{Proof of Lemma \ref{3.4}.}
As pointed out in the paragraph preceding Lemma \ref{3.3}, $\dfrac{\TNPMJA}{\CNPMJA}$ is an algebraic integer and hence  the lemma is proved when $k_m = 0$.  From now on, it may be assumed that $k_m \geq 1$. Denote $ \sum\limits_{j=0}^{p^{k_m}-1}\left(a'_m\theta^{n'_m}\right)^{j}$ by $\eta_m$. By hypothesis $p\nmid a$  so $p\nmid C_m a'_m$.   Let $w'_m$ be an integer such that  \begin{equation}\label{aaa}
w_{m}\CNPMJA (a'_{m})^{p^{k_m} - 1} + w'_{m}p^{k_m} = 1.
\end{equation} By Lemma \ref{3.3}, $\dfrac{\eta_m}{p^{k_m}}$ is an algebraic integer and hence so is  $w_{m}\theta^{j_m} \left(\dfrac{\eta_m}{p^{k_m}}\right) + w'_{m}\left(\dfrac{\TNPMJA}{\CNPMJA}\right) = \beta$ (say). Using (\ref{aaa}) together with the fact that $j_m + n'_{m} (p^{k_m} -1) =j_m +  (n-\frac{n}{p^{k_m}} ) =m$, a simple calculation shows that 
$$\beta
= \dfrac{\TNPMJA + \ANPMJA}{p^{k_m}~ \CNPMJA},$$ where $\ANPMJA$ =  $ w_{m}\CNPMJA\theta^{j_{m}} \sum\limits_{j=0}^{p^{k_m}-2}\left(a'_m\theta^{n'_m}\right)^{j} $. This proves the lemma.

\section{Proof of Theorems \ref*{3.3'}, \ref*{3.1} and \ref*{3.new}.}
For algebraic integers $\alpha_1, \alpha_2, \alpha_3, \cdots, \alpha_n$ in $K$ which are linearly independent over $\Q$, $D_{K/\mathbb{Q}} (\alpha_1, \alpha_2, \cdots, \alpha_n)$ will stand for the determinant of the $n\times n$ matrix with $(i,j)$-th entry $Tr_{K/\mathbb{Q}}(\alpha_i \alpha_j)$. If $M$ denotes the group with basis $\alpha_1, \alpha_2, \cdots, \alpha_n $, then as is well known $D_{K/\mathbb{Q}} (\alpha_1, \alpha_2, \cdots, \alpha_n) =[A_K: M]^2 d_K$. So the problem of finding an integral basis of $K$ is same as finding algebraic integers $\gamma_1,\gamma_2,\cdots, \gamma_n$ in $K$ such that $D_{K/\mathbb{Q}} (\gamma_1,\gamma_2,\cdots, \gamma_n) = d_K$. 
We shall use the following elementary result proved in \cite[Problem 435]{Book}.

\noindent\textbf{Lemma 3.A.} {\it Let $t,n$ be positive integers. Then  $$\sum\limits_{m=1}^{n-1} \bigg\lfloor \dfrac{tm}{n} \bigg\rfloor  = \dfrac{1}{2} [(n-1)(t-1) + gcd(t,n) -1].$$}

\noindent\textit{Proof of Theorem \ref{3.1}.} We first prove the theorem when $S = \emptyset$. As pointed out in the paragraph preceding Lemma \ref{3.3}, $\frac{\theta^m}{C_m}$ is an algebraic integer for $1 \leq m \leq n-1$.  The transition matrix from $\{1, \frac{\theta}{C_1}, \frac{\theta^2}{C_2}, \cdots, \frac{\theta^{n-1}}{C_{n-1}}\}$ to $\{1, \theta, \cdots, \theta^{n-1}\}$ has determinant $\prod\limits_{m=1}^{n-1}C_m = C$ (say). As is well known $$ D_{K/\mathbb{Q}}(1,\theta, \theta^2,\cdots, \theta^{n-1})= (-1)^{\binom{n}{2}}N_{K/\mathbb{Q}}(n\theta^{n-1})= (-1)^{\frac{(n-1)(n-2)}{2}}n^n a^{n-1}.$$
Consequently using  a basic result (cf. \cite[Proposition 2.9]{Nar}) and the above equation, we see that 
\begin{equation}\label{eq:3.000}
D_{K/\Q}\left(1, \frac{\theta}{C_1}, \frac{\theta^2}{C_2}, \cdots, \frac{\theta^{n-1}}{C_{n-1}}\right) = \frac{1}{C^2}D_{K/\Q}(1, \theta. \cdots, \theta^{n-1}) = \frac{(-1)^{\frac{(n-1)(n-2)}{2}}n^na^{n-1}}{C^2}.
\end{equation}
Applying Lemma 3.A, we have
\begin{equation}\label{eq:3.41}
C  = \prod\limits_{m=1}^{n-1}\prod\limits_{i=1}^{n-1}a_i^{\lfloor\frac{im}{n}\rfloor} = \prod\limits_{i=1}^{n-1}a_i^{\sum\limits_{m=1}^{n-1}\lfloor\frac{im}{n}\rfloor}  = \prod\limits_{i=1}^{n-1}a_i^{\frac{(n-1)(i-1) + \gcd(i,n)-1}{2}}.
\end{equation}
Substituting for $C$ in $(\ref{eq:3.000})$, we obtain
$$D_{K/\Q}\left(1, \frac{\theta}{C_1}, \frac{\theta^2}{C_2}, \cdots, \frac{\theta^{n-1}}{C_{n-1}}\right) = \frac{(-1)^{\frac{(n-1)(n-2)}{2}}  n^n a^{n-1}}{\prod\limits_{i=1}^{n-1}a_i^{(n-1)(i-1) + \gcd(i,n)-1}}.$$
Keeping in mind that $a = sgn(a)\prod\limits_{i=1}^{n-1}a_i^i$, a simple calculation shows that the above equation can be rewritten as 
\begin{equation}\label{eq:3.111}
D_{K/\Q}\left(1, \frac{\theta}{C_1}, \frac{\theta^2}{C_2}, \cdots, \frac{\theta^{n-1}}{C_{n-1}}\right) = (-1)^{\frac{(n-1)(n-2)}{2}}sgn(a^{n-1})n^n\prod\limits_{i=1}^{n-1}a_i^{n - \gcd(i,n)}. 
\end{equation}
If $\prod\limits_{j=1}^{l}q_j^{t_j}$ is the prime factorization of $|a|$, then it can be easily seen that 
\begin{equation}\label{eq:3.112}
\prod\limits_{i=1}^{n-1}a_i^{n-\gcd(i,n)} =  \prod\limits_{j=1}^{l}q_j^{n-\gcd(t_j, n)}.
\end{equation} 
It is immediate from $(\ref{eq:3.112})$ and Theorem 1.A that the right hand side of $(\ref{eq:3.111})$ equals $d_K$. So we conclude that $\{1, \frac{\theta}{C_1}, \frac{\theta^2}{C_2}, \cdots, \frac{\theta^{n-1}}{C_{n-1}}\}$ is an integral basis of $K$ in the present case.
   
We now deal with the case  $S \neq \emptyset$. Retaining the notation introduced in the paragraph preceding Theorem $\ref{3.1}$, we first show that $\dfrac{\theta^m + \beta_m}{C_m \prod\limits_{i \in S}\PKm}= \gamma_m~ (\text{say})$ is an algebraic integer for $1\leq m\leq n-1$. Fix any $m$, $1\leq m\leq n-1$. If $S_m=\emptyset$, then by definition $\beta_m=0$ and $  \prod\limits_{i \in S}\PKm=1$ and so $\gamma_m= \frac{\theta^m}{C_m}$ is an algebraic integer. Consider now the situation when $S_m\neq \emptyset$.
Recall that when $i \in S_m$, then $z_{i,m} = \prod\limits_{j\in S_m\setminus\{i\}}p_j^{k_{j,m}}$ and $u_{i,m}$ are  integers such that $\sum\limits_{i\in S_m }u_{i,m}z_{i,m}=1$. Further by Lemma \ref{3.4}, the element $\dfrac{\theta^m + \delta_{i,m}}{C_m \PKm }$ of $K$ is an algebraic integer; consequently keeping in mind $\sum_{i\in S_m} u_{i,m} z_{i,m}=1$, we see that $$\sum\limits_{i\in S_m} u_{i,m}\dfrac{\theta^m + \delta_{i,m}}{C_m \PKm} =  \dfrac{\theta^m + \sum\limits_{i\in S_m} u_{i,m}z_{i,m} \delta_{i,m}}{C_m \prod\limits_{i\in S_m} \PKm }  = \dfrac{\theta^m + \beta_m}{C_m \prod\limits_{i \in S_m}\PKm}=\dfrac{\theta^m + \beta_m}{C_m \prod\limits_{i \in S}\PKm}  =\gamma_m $$ is an algebraic integer. 

Taking $\gamma_0=1$, it remains to be shown that $D_{K/\mathbb{Q}}(\gamma_0,\gamma_1,\cdots, \gamma_{n-1})=d_K$. As pointed out at the end of  the paragraph preceding Theorem \ref{3.1}, the power of $\theta$ occuring in $\beta_m$ is less than $m$. So it is clear that  the transition matrix 
from  $\{\gamma_0, \gamma_1, \cdots, \gamma_{n-1}\}$ to $\{1, \theta, \cdots, \theta^{n-1}\}$ has determinant $ 
 \prod\limits_{m=1}^{n-1}\left(C_m  \prod\limits_{i\in S}  p_i ^{k_{i,m}} \right)$ = $C\prod\limits_{i\in 
 S}p_i^{\left(\sum\limits_{m=1}^{n-1}k_{i,m}\right)}$, where $C = \prod\limits_{m=1}^{n-1}C_m$. 
Recall that $k_{i,m}$ is the largest integer less than or equal to $d_i$ such that $m\geq n-\frac{n}{p_i ^{k_{i,m}}} $. So when $1\leq j \leq d_i-1$ and $n-\frac{n}{p_i^{j}} \leq m < n - \frac{n}{p_i^{j+1}}$, we have $k_{i,m} = j$ and $k_{i,m} = d_i$ when $n - \frac{n}{p_i^{d_i}} \leq m <n$; consequently 
  \begin{equation*} \label{eq:132}
 \sum\limits_{m=1}^{n-1} k_{i,m} =  \sum\limits_{j=1}^{d_i-1 } j  
\left(  \frac{n}{p_{i}^{j}} -\frac{n}{p_{i}^{j+1}}\right)  + d_i \frac{n}{p_i ^{d_i}}  = \sum\limits_{j=1}^{d_i }\frac{n}{p_i 
 ^j} =n_i \sum\limits_{j=1}^{d_i} p_i ^{s_i -j}.   
 \end{equation*}  
Using the above equation together with $(\ref{eq:3.41})$, we see that the determinant of transition matrix from $\{\gamma_0, \gamma_1, \cdots, \gamma_{n-1}\}$ to $\{1, \theta, \cdots, \theta^{n-1}\}$ equals 
 $\prod\limits_{j=1}^{n-1}a_j^{\frac{(n-1)(j-1) + \gcd(j,n)-1}{2}}\prod\limits_{i\in S}p_i^{n_i 
 \sum\limits_{j=1}^{d_i} p_i ^{s_i -j}}$. Now arguing exactly as in the previous case, it can be easily seen that
 $$\vspace*{-0.1in} D_{K/\Q}(\gamma_0, \gamma_1, \cdots, \gamma_{n-1}) = (-1)^{\frac{(n-1)(n-2)}{2}}sgn(a^{n-1})(\prod\limits _{i=1}^k p_i ^{ v_i})\prod\limits_{j=1}^{n-1}a_j^{n - \gcd(j,n)}, \vspace*{-0.1in} $$
  where   $v_i $ equals  $ns_i -2n_i \sum\limits_{j=1}^{d_i }  p_i^{s_i 
 -j}$ or $ns_i $ according as  $i\in S$ or not. In view of $(\ref{eq:3.112})$ and  Theorem 1.A, the right hand side of the above equation equals $d_K$. Therefore  $\{\gamma_0, \gamma_1, \cdots, \gamma_{n-1}\}$ is an integral basis of $K$. 
\\

\noindent\textit{Proof of Theorem \ref{3.3'}.} It is immediate from the definition of $k_m$ that $k_m\geq 1$ if and only if  $ m\geq\phi(p^s)$; consequently (with notations as in Theorem $\ref{3.1}$) $\beta_m=0$ for $0\leq m < \phi(p^s)$  and hence the corollary follows at once from Theorem \ref{3.1} and Remark \ref{3.5'}.\hspace*{14.9cm}$\square$

\begin{proof}[Proof of Theorem \ref{3.new}.] We retain the notation of Theorem \ref{3.1} and adopt the convention that $\beta_m = 0$  for each $m$ if $S$ is empty. Since $a$ is squarefree, $C_m = 1$ for $1\leq m\leq n-1.$ So in view of   Theorem \ref{3.1},  $ \lbrace 1,  \dfrac{\theta^m +\beta_m}{ \prod\limits_{i\in S} p_i ^{k_{i,m}}} ~\big\vert~ 1\leq m\leq n-1  \rbrace $ is an integral basis of $K$. Let $\beta'_m$ denote the element of $K'$ obtained on replacing $\theta$ by $\theta'$ in the expression for $\beta_m$. The theorem is proved once we show that $ \lbrace 1,  \dfrac{\theta'^{m} +\beta'_m}{ \prod\limits_{i\in S} p_i ^{k_{i,m}}} ~\big\vert~ 1\leq m\leq n-1  \rbrace $ is an integral basis of $K'$. Fix any prime $p$ dividing $n$ and let $s$ denote the highest power of $p$ dividing $n$. Let $r, r'$ stand respectively for $v_p(a^{p-1}-1)-1, v_p(a'^{p-1}-1)-1$. In view of the definition of $\beta_m$, the desired assertion is proved once we show that
\begin{equation}\label{ans}
\min\{r, s\} = \min\{r', s\}.
\end{equation}
By hypothesis $a' \equiv a~(mod~np)$. Note that $(\ref{ans})$ needs to be verified when $p\nmid a$. Write $a'  = a+p^{s+1}b,$ $b\in \Z$.  So there exists $c \in \Z$ such that 
\begin{equation}\label{anssu}
r' + 1 = v_p(a'^{p} - a') = v_p(a^{p} - a + p^{s+1}c).
\end{equation}
If $r < s$, then  the above equation shows that $ r' + 1 = r +1$, which proves $(\ref{ans})$ in this case. 
 If $r \geq s$, then by $(\ref{anssu})$, $r' + 1 \geq s+1$ and hence  $(\ref{ans})$ is verified. This completes the proof of the theorem.
\end{proof}

\noindent\textbf{Acknowledgement.}  The second author is thankful to Indian National Science Academy, New Delhi for fellowship.


\end{document}